\renewcommand{\setminus}{\smallsetminus}
\renewcommand{\phi}{\varphi}
\newcommand{\R}{\mathbb{R}}
\newcommand{\bk}{\mathbf{k}}
\def\<{\ensuremath{\langle}}
\def\>{\ensuremath{\rangle}}
\newtheorem{theorem}{Theorem}[section]
\newtheorem{lemma}[theorem]{Lemma}
\theoremstyle{definition}
\newcommand{\excise}[1]{}
\begin{document}

\title{A non-Abelian analogue of Whitney's $2$-isomorphism theorem}
\author{Eric Katz}
\address{Department of Combinatorics \& Optimization, University of Waterloo, 200 University Avenue West, Waterloo, ON, Canada N2L 3G1} \email{eekatz@math.uwaterloo.ca}
\subjclass{}

\keywords{graph theory, fundamental group, Whitney's 2-isomorphism theorem}
\date{}
\thanks{}

\begin{abstract}
We give a non-abelian analogue of Whitney's $2$-isomorphism theorem for graphs.  Whitney's theorem states that the cycle space determines a graph up to $2$-isomorphism.  Instead of considering the cycle space of a graph which is an abelian object, we consider a mildly non-abelian object, the $2$-truncation of the group algebra of the fundamental group of the graph considered as a subalgebra of  the $2$-truncation of the group algebra of the free group on the edges.  The analogue of Whitney's theorem is that this is a complete invariant of $2$-edge connected graphs:
let $G,G'$ be $2$-edge connected finite graphs; if there is a bijective correspondence between the edges of $G$ and $G'$ that induces equality on the $2$-truncations of the group algebras of the fundamental groups, then $G$ and $G'$ are isomorphic.
\end{abstract}

\maketitle

\section{Introduction}\label{s:intro}

Let $G$ be a finite graph, and let $\bk$ be a field.  Pick an orientation for every edge of $G$.  The chain group $C_1(G;\bk)$ is the vector space over $\bk$ generated by the edges of $G$.  Within this space is the cycle space $Z_1(G;\bk)$, the vector space generated by cycles in $G$.  Whitney's $2$-isomorphism theorem \cite{Whitney}, \cite[Section 5.3]{Oxley} states that $Z_1(G;\bk)\subset C_1(G;\bk)$ determines $G$ up to two moves: vertex cleaving and Whitney twists.  Specifically, it states that if $G'$ is another finite graph and $\phi:\overrightarrow{E}(G)\rightarrow \overrightarrow{E}(G')$ is bijective map of oriented edges (that is, if $e^{-1}$ denotes $e$ with the opposite orientation then $\phi(e^{-1})=(\phi(e))^{-1}$), if the induced map on chain groups satisfies $\phi_*(Z_1(G;\bk))=Z_1(G';\bk)$ then after performing some combination of these moves on $G'$, one has that $\phi$ is a graph isomorphism.  Two graphs related after these moves are said to be $2$-isomorphic.  Because these moves cannot be applied non-trivially to a $3$-connected graph, it implies that $3$-connected graphs $G$,$G'$ satisfying the hypotheses are isomorphic.

It is natural to ask if there is a modification of this theorem that in certain situations allows one to conclude that more general $G$ and $G'$ are isomorphic without performing any moves.  Our approach is to consider invariants of graphs that are finer than the cycle space.  Our  invariants interpolate between the cycle space which is abelian and homological in nature and the fundamental group which is non-abelian and homotopy-theoretic.  The invariants are labelled by a positive integer $k$.  In an intuitive sense, our invariants are refinements of the cycle space.  The cycle space contains information about which edges are in a cycle but nothing about their order.  Our $k$th invariant contains information about which edges are in a cycle but also for a cycle and a list of $k$ edges, $e_{i_1},e_{i_2},\dots,e_{i_k}$, it contains information about how many times $e_{i_1},e_{i_2},\dots,e_{i_k}$ occur in that order in the cycle (counted with signs and multiplicities).  It turns out that for $k\geq 2$, our invariant is a complete invariant for $2$-edge connected graphs.  

We now give the definition of our invariant deferring some notation and background about group algebras to the next section.  Let $v_0$ be a vertex of $G$ that we will call the base-point.  Let $\pi_1(G,v_0)$ be the fundamental group of $G$ with base-point $v_0$.   Pick arbitrary orientations on the edges of the graph.  Each closed path $G$ based at $v_0$ can be expressed as a word in the edges $e_{i_1}^{\pm 1}e_{i_2}^{\pm 1}\dots e_{i_n}^{\pm 1}$ where the path consists of the edges $e_{i_1},e_{i_2},\dots,e_{i_n}$ traversed in order and the sign of the exponent is determined by whether or not the edge occurs with its given orientation in $G$.  This association of words to closed paths gives a homomorphism 
\[w:\pi_1(G,v_0)\rightarrow F_{E(G)}\]
where $F_{E(G)}$ is the free group on the edges.  

One may take truncated group algebras of the groups involved to obtain finite dimensional $\bk$-algebras.  Consider the induced maps of group algebras,
\[w_*:\bk[\pi_1(G,v_0)]\rightarrow\bk[F_{E(G)}].\]
The group algebras are equipped with augmentation homomorphisms (see Section \ref{notation}),
\[\varepsilon_\pi:\bk[\pi_1(G,v_0)]\rightarrow\bk,\ \varepsilon_F:\bk[F_{E(G)}]\rightarrow\bk\]
with kernels $J_{\pi_1(G,v_0)}, J_{F_{E(G)}}$, respectively.  
For any non-negative integer $k$, $\bk[\pi_1(G,v_0)]/J_{\pi_1(G,v_0)}^{k+1}$, $\bk[F_{E(G)}]/J_{F_{E(G)}}^{k+1}$ are finite-dimensional $\bk$-algebras, called the $k$-truncations.
The map $w_*$ descends to a map of truncated group algebras:
\[w_*:\bk[\pi_1(G,v_0)]/J_{\pi_1(G,v_0)}^{k+1}\rightarrow \bk[F_{E(G)}]/J_{F_{E(G)}}^{k+1}.\]
The $2$-truncation is sufficient to give a complete invariant of $2$-edge connected finite graphs according to our main theorem:

\begin{theorem} \label{t:theorem} Let $G,G'$ be $2$-edge connected finite  graphs.
Let $\phi:\overrightarrow{E}(G)\rightarrow \overrightarrow{E}(G')$ be a bijective map of oriented edges.   If we have the following equality of subalgebras in $\bk[F_{E(G')}]/J_{F_{E(G')}}^3$:
\[\phi_*(w_*(\bk[\pi_1(G,v_0)]/J_{\pi_1(G,v_0)}^3))=w'_*(\bk[\pi_1(G',v'_0)]/J_{\pi_1(G',v'_0)}^3)\]
 then $\phi$ is a graph isomorphism satisfying $\phi(v_0)=v'_0$.
\end{theorem}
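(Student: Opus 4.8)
The plan is to extract from the filtered algebra $\bk[F_{E(G)}]/J_{F_{E(G)}}^3$ a hierarchy of combinatorial invariants and show that the degree-$2$ layer already rigidifies the graph. First I would fix the standard identification of $\bk[F_{E(G)}]/J_{F_{E(G)}}^3$ with the truncated tensor algebra in degrees $\le 2$ on $C_1(G;\bk)$: writing $X_e = e-1$, the classes of the $X_e$ give a basis of $J_{F_{E(G)}}/J_{F_{E(G)}}^2 \cong C_1$, and the classes of the products $X_e X_{e'}$ give a basis of $J_{F_{E(G)}}^2/J_{F_{E(G)}}^3 \cong C_1\otimes C_1$. For a based loop $\gamma=e_{i_1}^{\epsilon_1}\cdots e_{i_n}^{\epsilon_n}$ I would expand $w_*(\gamma)=\prod_t(1+\epsilon_t X_{i_t}+\cdots)$ modulo $J_{F_{E(G)}}^3$ and record that the degree-$1$ part is the signed edge count $\sum_t \epsilon_t X_{i_t}$, i.e.\ the class of $\gamma$ in the cycle space $Z_1(G;\bk)\subset C_1$, while the degree-$2$ part records the order in which edges are traversed relative to the base point. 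Since $A_G := w_*(\bk[\pi_1(G,v_0)]/J_{\pi_1(G,v_0)}^3)$ is a subalgebra, its degree-$2$ piece contains all products of degree-$1$ elements, hence all of $Z_1\otimes Z_1$; the genuinely new datum is thus a \emph{secondary ordering invariant}, a well-defined linear map $\sigma_G\colon Z_1 \to (C_1\otimes C_1)/(Z_1\otimes Z_1)$ sending a cycle class to the reduction of the quadratic term of any loop representing it (well defined and linear because $A_G$ is a subspace containing $Z_1\otimes Z_1$). I would then note that $A_G$ is equivalent to the pair $(Z_1(G),\sigma_G)$.

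From the hypothesis $\phi_*(A_G)=A_{G'}$ I would first extract the degree-$1$ consequence $\phi(Z_1(G;\bk))=Z_1(G';\bk)$: the edge bijection $\phi$ respects the augmentation filtration, so the induced map on $J/J^2$ carries the degree-$1$ part of $A_G$ isomorphically to that of $A_{G'}$, and these are exactly the cycle spaces. Whitney's $2$-isomorphism theorem then applies and tells me that, under $\phi$, the graph $G'$ differs from $G$ only by a sequence of the two admissible moves, namely Whitney twists at $2$-vertex cuts and regluings of the block tree at cut vertices (vertex identification/cleaving). The theorem is thereby reduced to the claim that each such move, carried out nontrivially, alters $\sigma_G$, so that the matching secondary data $\phi_*\sigma_G=\sigma_{G'}$ forces every move to be trivial and $\phi$ to be a genuine isomorphism.

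The heart of the argument, and the step I expect to be hardest, is proving that $\sigma_G$ detects these moves, or equivalently that $(Z_1,\sigma_G)$ reconstructs the vertex--edge incidence. I would reformulate the goal as recovering the decomposition of the cut space $W:=Z_1^{\perp}\subset C_1$ into the individual vertex cuts $\delta v=\sum_{e\ni v}\pm e$, since Whitney's ambiguity is precisely the failure of $Z_1$ alone to determine this decomposition. For a Whitney twist at a $2$-cut $\{u,v\}$ I would produce a cycle crossing the cut and observe that the twist reverses the traversal order of the edges on one side, flipping the sign of the antisymmetric component of $\sigma_G$ on the relevant wedge terms; $2$-edge-connectivity guarantees such a crossing cycle exists. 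For a regluing at a cut vertex I would use that a fundamental loop through a block other than the one containing $v_0$ drags along a base-point connecting path $\bar p$, contributing cross terms of the form $\bar p\wedge \ell$ to $\sigma_G$ that depend on where the blocks are attached, so changing the attachment changes $\sigma_G$. Assembling these local computations into a single argument that $\sigma_G$ separates every nontrivial configuration, uniformly over all $2$-edge-connected graphs rather than by a finite case check, and checking that the same data pins down the base vertex to yield $\phi(v_0)=v'_0$, is the principal obstacle.
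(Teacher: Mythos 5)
Your algebraic setup is sound: the identification of the $2$-truncation with the degree $\le 2$ part of the tensor algebra on $C_1$, the observation that the degree-$2$ piece of $A_G$ contains $Z_1\otimes Z_1$ because $A_G$ is a subalgebra, the resulting well-defined linear secondary invariant $\sigma_G\colon Z_1\to (C_1\otimes C_1)/(Z_1\otimes Z_1)$, and the degree-$1$ reduction giving $\phi(Z_1(G;\bk))=Z_1(G';\bk)$ are all correct. But from that point on what you have is a plan, not a proof, and the gap sits exactly where you place it yourself: you never establish that matching secondary data forces every Whitney move to be trivial. The difficulty is worse than a deferred computation. A $2$-isomorphism is an arbitrary finite composition of twists and cleavings/identifications passing through intermediate graphs, so showing that each \emph{single} nontrivial move alters $\sigma_G$ (which is all your local sketches address) does not show that a nontrivial \emph{composite} alters it; you would need a uniform argument that $(Z_1,\sigma_G)$ reconstructs the vertex--edge incidence outright. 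In addition, $\sigma_G$ depends on the base point while Whitney's theorem and its moves do not track base points at all, so even after all moves are shown trivial you still owe a separate argument that the data pins down $\phi(v_0)=v'_0$; this too is acknowledged but not supplied.

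It is worth knowing that the paper's proof bypasses Whitney's theorem entirely, and in doing so supplies precisely the uniform mechanism your plan lacks. Its key lemma recovers incidence directly from the degree-$2$ data: for a based loop $\gamma=\gamma_- e\gamma_+$ traversing an edge $e$ exactly once, the coefficients of the terms $(e_j-1)(e-1)$ in $w_*(\gamma)$ assemble into the $1$-chain $\eta$ of the prefix $\gamma_-$, and comparing $\partial\eta$ with the boundary of the corresponding chain $\eta'$ extracted from any expression of $\phi_*(w_*(\gamma))$ as a combination of loops of $G'$ forces the initial vertex of $\phi(e)$ to be the image of the initial vertex of $e$, and simultaneously forces $\phi(v_0)=v'_0$. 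A short induction over connected subgraphs $H$ on which $\phi$ is already an isomorphism, using $2$-edge connectivity to produce the return paths needed to close loops through each new edge, then builds the graph isomorphism edge by edge. If you want to salvage your route, that boundary computation is the missing ingredient: once you can recover endpoints from prefix chains, you reconstruct incidence directly and Whitney's theorem becomes unnecessary.
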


We note that this theorem is similar to Whitney's theorem.  The hypotheses of the theorem imply the analogous fact about a lower  order truncation of group algebras:
\[\phi_*(w_*(\bk[\pi_1(G,v_0)]/J_{\pi_1(G,v_0)}^2))=w'_*(\bk[\pi_1(G',v'_0)]/J_{\pi_1(G',v'_0)}^2).\]
As we will discuss below, $\bk[\pi_1(G,v_0)]/J_{\pi_1(G,v_0)}^2\cong\bk\oplus Z_1(G;\bk)$.  On the other hand, we have $\bk[F_{E(G)}]/J_{F_{E(G)}}^2\cong\bk\oplus C_1(G;\bk)$.  Consequently, the hypotheses for the $1$-truncation imply that $G$ and $G'$ have the same cycle space, and Whitney's theorem states that they are $2$-isomorphic.

The non-abelian structure that is used in this paper is only very mildly noncommutative.  In fact, the fundamental group acts unipotently on its truncated group algebra. We could perhaps also call this result the unipotent analogue of Whitney's theorem.  One may ask if there are more places in combinatorics where one can incorporate noncommutativity to prove rigidity theorems.  The author hopes, perhaps overly speculatively, that there are similar results that make up a combinatorial theory analogous to Grothendieck's anabelian program in algebraic geometry \cite{G}.

This work, we hope, hints at an extension of the notion of matroids.  Can one axiomatize the map 
\[w_*:\bk[\pi_1(G,v_0)]/J_{\pi_1(G,v_0)}^{3}\rightarrow \bk[F_{E(G)}]/J_{F_{E(G)}}^{3}\]
the way one axiomatizes the cycle space of a graph into a matroid?  This suggests a sort of unipotent matroid.  Which ordinary matroids lift to unipotent matroids?  If not all, is there a combinatorial characterization of the obstruction to a unipotent structure?

This paper was inspired by Hain's theory of the mixed Hodge structure on the fundamental group of complex manifolds \cite{Hain},\cite[Ch. 9]{PSMixed} which follows Morgan's work on the mixed Hodge structure on the truncation of the group ring of the fundamental group \cite{Morgan}.  Hain is able to put a Hodge structure on a unipotent completion of the fundamental group.  As an application, Hain \cite{Hain} and Pulte \cite{Pulte} give a pointed Torelli theorem for Riemann surfaces, that is, they show that the mixed Hodge structure on the truncated group ring is a complete invariant of a pointed Riemann surface (up to some finite ambiguity for the base-point).  The recent work of Caporaso and Viviani \cite{CV} proves a Torelli theorem for graphs and tropical curves by making use of Whitney's $2$-isomorphism theorem, so it seemed natural to ask if there is an extension of Whitney's theorem that could be used to prove a pointed Torelli theorem for graphs and tropical curves.   

We would like to thank Jim Geelen, Richard Hain, David Jackson, Farbod Shokrieh, David Wagner, Thomas Zaslavsky, and David Zureick-Brown for valuable discussions.

\section{Truncated group algebras of fundamental groups} \label{notation}
To a group $\Gamma$ and a field $\bk$, one can associate the group algebra $\bk[\Gamma]$.  The reader loses nothing by taking $\bk$ to be $\R$. This is the algebra over $\bk$ whose elements are formal linear combinations of the form $\sum a_gg$ where $a_g\in\bk$ is $0$ for all but finitely many elements.   Multiplication in the group algebra is the linear extension of $g\cdot g'=gg'.$  Therefore, when $\Gamma=F_n$, the free group on $n$ generators, $k[F_n]$ is the free noncommutative polynomial algebra on $n$ indeterminates.
Let the augmentation map $\varepsilon:\bk[\Gamma]\rightarrow\bk$ be the linear extension of $\varepsilon:g\mapsto 1$ for all $g\in G$.  Let $J=\ker(\varepsilon)$ be the augmentation ideal.  It is the set of all elements of the form $\sum a_g g$ where $\sum a_g=0$.  The $k$-truncation of the group algebra is $\bk[\pi]/J^{k+1}$.

The assignment of group algebras to a group is functorial, so the homomorphism $w$ (described above) induces a homomorphism of $\bk$-algebras,
\[w_*:\bk[\pi_1(G,v_0)]\rightarrow \bk[F_{E(G)}].\]
Moreover, one has an induced map of truncations:
\[w_*:\bk[\pi_1(G,v_0)]/J_{\pi_1(G,v_0)}^{k+1}\rightarrow \bk[F_{E(G)}]/J_{F_E(G)}^{k+1}.\]
Now we can consider functoriality under graph morphisms.  Let $\phi:G\rightarrow G'$ be a morphism of graphs, that is a map $\phi:V(G)\cup E(G)\rightarrow V(G')\cup E(G')$ such that $\phi(V(G))\subseteq V(G')$ and for every $v\in V(G)$, $e\in E(G)$ with $v\in e$ either $\phi(v)=\phi(e)$ or $\phi(e)\in E(G')$ and $\phi(v)\in \phi(e)$.  Given two directed graphs $G,G'$ with base-points $v_0,v'_0$ and a morphism $\phi:G\rightarrow G'$ satisfying $\phi(v_0)=v'_0$, we have an induced map of fundamental groups $\phi_*:\pi_1(G,v_0)\rightarrow\pi_1(G',v'_0)$.  Moreover, if $\phi$ is a graph morphism (not necessarily satisfying $\phi(v_0)=v_0'$), there is an induced map $\phi_*:F_{E(G)}\rightarrow F_{E(G')}$ defined as follows: if $\phi(e)\in V(G')$ then $\phi_*(e)=\emptyset$, the empty word; if $\phi_*(e)\in E(G)$ then $\phi_*(e)=\phi(e)^{\pm 1}$ where the sign of the exponent depends on whether $\phi$ is orientation preserving or reversing on the edge $e$.  Consequently, we have the following commutative diagram of truncated group algebras:
\[\xymatrix{
\bk[\pi_1(G,v_0)]/J_{\pi_1(G,v_0)}^{k+1}\ar[d]_{\phi_*}\ar[r]^>>>>>{w}&\bk[F_{E(G)}]/J_{F_{E(G)}}^{k+1}\ar[d]^{\phi_*}\\
\bk[\pi_1(G',v'_0)]/J_{\pi_1(G',v'_0)}^{k+1}\ar[r]^>>>>{w'}&\bk[F_{E(G')}]/J_{F_{E(G')}}^{k+1}
}\]

In this paper, we will work with the $1$- and $2$-truncations.  In our situation, every group will be a free group.  In these cases, the description of the truncated group algebra is rather straightforward.  Let $F_n$ be the free group on generators $x_1,\dots,x_n$.  We note that the there is a short exact sequence of vector spaces
\[\xymatrix{
0\ar[r]&J_{F_n}^k/J_{F_n}^{k+1}\ar[r]&\bk[F_n]/J_{F_n}^{k+1}\ar[r]&\bk[F_n]/J_{F_n}^{k}\ar[r]&0
}\]
Because $\bk[F_n]/J_{F_n}^{k}$ is a vector space,
the above exact sequence splits.  
Therefore, we have the vector space isomorphism,
\[\bk[F_n]/J_{F_n}^{k+1}\cong \bk\oplus J_{F_n}^1/J_{F_n}^{2}\oplus\dots\oplus J_{F_n}^k/J_{F_n}^{k+1}.\]
Now, $J_{F_n}^k/J_{F_n}^{k+1}$ is generated as a vector space by polynomials of the form 
\[(x_{i_1}-1)(x_{i_2}-1)\dots(x_{i_k}-1)\]
where $1$ is the element corresponding to the empty word $\emptyset$.
  Consequently,  the vector space $\bk[F_n]/J_{F_n}^2$ is generated by $1$ together with the following basis of $J_{F_n}/J_{F_n}^2$: 
 \[(x_1-1),(x_2-1),\dots,(x_n-1).\]
 The element $1$ acts as the identity, and the multiplication of two elements of $J_{F_n}$ is always $0$.  The natural map $F_n \rightarrow \bk[F_n]/J_{F_n}^2$ takes $x_i$ to $1+(x_i-1)$ and $x_i^{-1}$ to $1-(x_i-1)$.  Consequently, the word 
$x_{i_1}^{b_1}\dots x_{i_l}^{b_l}$ is mapped to $1+b_1(x_{i_1}-1)+\dots+b_{i_l}(x_{i_l}-1)$.  It follows that $\bk[F_n]/J_{F_n}^2\cong \bk\oplus (F_n^{\operatorname{ab}}\otimes\bk)$ where $F_n^{\operatorname{ab}}$ is the abelianization of $F_n$.  Consequently, we have that the truncation of $w_*$
\[w_*:\bk[\pi_1(G,v_0)]/J_{\pi_1(G,v_0)}^2\rightarrow \bk[F_{E(G)}]/J_{F_{E(G)}}^2\]
is isomorphic to 
\[w_*:\bk\oplus H_1(G;\bk)=\bk\oplus Z_1(G;\bk)\rightarrow\bk\oplus C_1(G;\bk)\]
and therefore contains the description of the cycle space.

The $2$-truncation is  richer.  It has a vector space basis given by 
\[1,(x_i-1),(x_{i}-1)(x_{j}-1)\]
 as $i$ and $j$ range from $1$ to $n$.  The natural map from $F_n$ takes $x_i$ to $1+(x_i-1)$ and $x_i^{-1}$ to $1-(x_i-1)+(x_i-1)^2$.  Consequently, we may write for $b_i=\pm 1$, 
\[x_i^{b_i}\mapsto 1+b_i(x_i-1)+\delta_{b_i,-1}(x_i-1)^2\]
where $\delta_{i,j}$ the Kronecker delta,
and we may conclude  that a word of the form $x_{i_1}^{b_1}\dots x_{i_l}^{b_l}$ where $b_i=\pm 1$ is mapped as follows:
\[x_{i_1}^{b_1}\dots x_{i_l}^{b_l}\mapsto 1+\sum_j b_j (x_{i_j}-1)+\sum_{j<k} b_jb_k(x_{i_j}-1)(x_{i_k}-1)+\sum_{j|b_j=-1} (x_{i_j}-1)^2.\]
Note that this counts with signs the number of times $x_j$ comes before $x_k$ in a word.



If $\phi:\overrightarrow{E}(G)\rightarrow \overrightarrow{E}(G')$ is a bijective map of oriented edges (with 
no requirement on the incidence of the edges), then it induces a homomorphism $\phi: F_{E(G)}\rightarrow F_{E(G')}$.
In the case where $\phi$ gives a graph isomorphism of a subgraph $H\subset G$ onto its image, we will denote the restriction of $\phi$ to $H$ by $\phi|_{H}$.  In this case, it makes sense to speak of the value of $\phi|_{H}$ on vertices of $H$.

\section{Proof of Theorem}

The proof will be by induction on the number of edges for which $\phi$ is a graph isomorphism.  Our main tool is the following lemma:

\begin{lemma} \label{l:isoextension}
Let $(G,v_0),(G',v'_0)$ be $2$-edge connected finite rooted graphs and let $\phi:\overrightarrow{E}(G)\rightarrow \overrightarrow{E}(G')$ be a bijective map of oriented edges such that we have the following equality of subalgebras in $\bk[F_{E(G')}]/J_{F_{E(G')}}^3$:
\[\phi_*(w_*(\bk[\pi_1(G,v_0)]/J_{\pi_1(G,v_0)}^3))=w'_*(\bk[\pi_1(G',v'_0)]/J_{\pi_1(G',v'_0)}^3)\]
Let $\gamma$ be a closed path in $G$ based at $v_0$.  Let $e$ be an edge that is not a self-edge at $v_0$ and that occurs exactly once in $\gamma$ so that $\gamma=\gamma_- e \gamma_+$ for paths $\gamma_-,\gamma_+$.  Suppose $\phi$ is a graph isomorphism of the path $\gamma_-$ onto its image.  Then the terminal vertex of the path $\phi|_{\gamma_-}(\gamma_-)$ is the equal to the initial vertex of $\phi(e)$.   Moreover, $\phi|_{\gamma_-}(v_0)=v'_0$.
\end{lemma}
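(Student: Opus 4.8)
The plan is to distill from the degree-$2$ truncation a single linear functional that reads off, for each edge, the position of its initial vertex \emph{relative to the base-point}, and then to evaluate this functional on the distinguished edge $e$. The cycle space (the degree-$1$ data) is blind to the base-point, so the whole point is that the genuinely non-abelian degree-$2$ data knows where $v'_0$ sits; building the gadget that extracts this is the heart of the matter.

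Concretely, for $\alpha\in\bk[F_{E(G')}]/J_{F_{E(G')}}^3$ write $\alpha^{(1)}\in J/J^2=C_1(G';\bk)$ and $\alpha^{(2)}\in J^2/J^3$ for its degree-$1$ and degree-$2$ components. For an edge $f$ of $G'$ (with its chosen orientation) let $L_f\colon J^2/J^3\to C_1(G';\bk)$ be the linear map with $L_f\big((y_a-1)(y_b-1)\big)=(y_a-1)$ if $b=f$ and $0$ otherwise, and $(y_f-1)^2\mapsto(y_f-1)$; applied to the image of a word, $L_f$ sums the signed \emph{prefix chains} taken up to each occurrence of $f$. Writing $\partial\colon C_1(G';\bk)\to C_0(G';\bk)$ for the graph boundary, the key relation I would establish is that for every $\alpha$ in the subalgebra $A':=w'_*(\bk[\pi_1(G',v'_0)]/J^3)$,
\[
\partial\, L_f\big(\alpha^{(2)}\big)\;=\;n_f(\alpha)\,\big(\mathrm{init}(f)-v'_0\big),
\]
where $n_f(\alpha)$ is the coefficient of $(y_f-1)$ in $\alpha^{(1)}$ and $\mathrm{init}(f)$ is the initial vertex of $f$. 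To prove this I would use that $A'$ is the $\bk$-span of the Magnus images $w'_*(\delta)$ of closed walks $\delta=s_1\cdots s_q$ based at $v'_0$. For such a $\delta$ the degree-$2$ formula of Section \ref{notation} gives $\delta^{(2)}=\sum_{j<k}[s_j][s_k]+(\text{diagonal})$ with $[s_i]=b_i(y_{c_i}-1)$, so that $L_f(\delta^{(2)})=\sum_{k:\,c_k=f}b_k\Lambda_k+(\text{diagonal})$ where $\Lambda_k=\sum_{j<k}[s_j]$ is the chain of the initial subwalk and hence $\partial\Lambda_k=u_{k-1}-v'_0$. A short telescoping, folding in the diagonal correction, collapses the head/tail contributions and yields exactly $n_f(\delta)(\mathrm{init}(f)-v'_0)$; linearity then extends the identity to all of $A'$. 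I expect the careful bookkeeping of signs, orientations, and the diagonal $(y_f-1)^2$ terms here to be the main technical obstacle, since it is where one must verify that $\partial L_f(\alpha^{(2)})$ genuinely depends only on $\alpha^{(1)}$ and really points at $\mathrm{init}(f)$.

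Next I would apply the relation to $\alpha:=\phi_*(w_*(\gamma))\in A'$ with $f$ the underlying edge of $\phi(e)$. Because $e$ occurs exactly once in $\gamma$ and $\phi$ is a bijection of edges, $\phi(e)$ occurs exactly once in $\phi_*(\gamma)$, namely at the junction after $\phi_*(\gamma_-)$; hence $L_f(\alpha^{(2)})$ is precisely the $1$-chain of $\phi_*(\gamma_-)$. Since $\phi$ is a graph isomorphism on $\gamma_-$, this is the chain of the honest path $\phi|_{\gamma_-}(\gamma_-)$, whose boundary is $w'-a$, where $a:=\phi|_{\gamma_-}(v_0)$ is its initial vertex and $w'$ its terminal vertex. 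A case check on the orientation of $\phi(e)$ (handling the diagonal term when $\phi$ reverses $e$) shows both signs give the same master equation
\[
w'-a\;=\;\mathrm{init}(\phi(e))-v'_0 \qquad\text{in }C_0(G';\bk).
\]

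Finally I would read off both conclusions from this equation. Regarding the four vertices as basis elements of $C_0(G';\bk)$, the identity says $w'+v'_0=a+\mathrm{init}(\phi(e))$ as $0$-chains, i.e.\ $\{w',v'_0\}=\{a,\mathrm{init}(\phi(e))\}$ as multisets. Since $\gamma_-$ is an embedded (hence simple) path with distinct endpoints whenever it is nontrivial, injectivity of $\phi|_{\gamma_-}$ forces $a\neq w'$, which excludes the matching $w'=a$ and leaves only $w'=\mathrm{init}(\phi(e))$ and $a=v'_0$ — exactly the two assertions of the lemma. The hypotheses that $e$ is not a self-edge at $v_0$ and the degenerate empty-$\gamma_-$ case (where the relation instead gives $\mathrm{init}(\phi(e))=v'_0$ directly from an empty prefix) keep the junction non-degenerate. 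If one reads ``path'' loosely enough that $\gamma_-$ could be a closed walk with $a=w'$, the remaining subtlety — which is where one leans on $G$ being connected with no bridges — is handled by instead applying the displayed relation to $\gamma_-$ itself, which then lies in $\pi_1(G,v_0)$, for an edge of nonzero net multiplicity, again forcing $a=v'_0$.
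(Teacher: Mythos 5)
Your proposal is correct and follows essentially the same route as the paper's proof: your map $L_f$ and the key relation $\partial L_f(\alpha^{(2)})=n_f(\alpha)\,(\mathrm{init}(f)-v'_0)$ are exactly the paper's device of grouping the terms $c'_j(e'_j-1)(e'-1)$ into prefix chains $\eta'_i$ with $\partial\eta'_i=\sum_j b_{i,j}(x'-v'_0)$, summed over the loop decomposition $\phi_*(w_*(\gamma))=\sum_i a_i w'_*(\delta'_i)$ supplied by the subalgebra hypothesis, and your master equation $w'-a=\mathrm{init}(\phi(e))-v'_0$ is precisely the paper's $\phi|_{\gamma_-}(x)-\phi|_{\gamma_-}(v_0)=x'-v'_0$ obtained by comparing $\phi_*(\eta)$ with $\eta'$. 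The only difference is one of packaging and care: you state the extraction as a linear functional on the whole subalgebra and you treat explicitly the degenerate cases (the multiset reading of the $0$-chain identity and the empty-$\gamma_-$ case) that the paper passes over silently.
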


\begin{proof}

Write $e=xy$ where we may have $x=y$.  We may suppose that $e$ occurs with its given orientation in $\gamma$.  Express $w_*(\gamma)\in \bk[F_{E(G)}]/J_{F_{E(G)}}^3$ in terms of the basis $
 1,(e_i-1),(e_i-1)(e_j-1)$ for an enumeration $\{e_i\}$ of the edges in $G$.
 The coefficient of $(e-1)$ is $1$ because $e$ occurs once in $\gamma$. 
 Group together the terms in $w_*(\gamma)$ of the form $c_j(e_j-1)(e-1)$ for varying $j$ as $\eta (e-1)$.  Therefore, $\eta$ corresponds to the edges coming before $e$ in $\gamma$ and hence is the $1$-chain representing $\gamma_-$.  Consequently, 
 if $\partial:C_1(G;\bk)\rightarrow C_0(G;\bk)$ is the differential in simplicial homology, then $\partial \eta=\partial \gamma_-=x-v_0$.

Let $e'=\phi(e)$.  Write $e'=x'y'$.  We must show that $\phi|_{\gamma_-}(x)=x'$.  By hypothesis, we can find an equality in $\bk[F_{E(G')}]/J_{F_{E(G')}}^3$ of the form
\[ \phi_*(w_*(\gamma))=\sum_i a_iw'_*(\delta'_i)\]
for closed loops $\delta'_i$ based at $v'_0$ and $a_i\in\bk$.  
Now we decompose the based loop $\delta'_i$ into paths according to each occurrence of $e'$ as follows:
\[\delta'_i=(\alpha_{i,1})(e')^{b_{i,1}}(\alpha_{i,2})(e')^{b_{i,2}}\dots(\alpha_{i,l})(e')^{b_{i,l_i}}(\alpha_{i,l_i+1})\]
where $b_{i,j}=\pm 1$ and the $\alpha_{i,j}$'s do not involve $e'$.  
Because the coefficient of $(e-1)$ in $w_*(\gamma)$ is $1$, the coefficient of $(e'-1)$ in $\phi_*(w_*(\gamma))$ is also equal to $1$.  This implies that we have
\[\sum_i\sum _{j=1}^{l_i} a_ib_{i,j}=1.\]
 We group together terms of the form $c'_j(e'_j-1)(e'-1)$ for varying $j$ in $w'_*(\delta_i)$ to get $\eta_i'(e'-1)$ where we view $\eta'_i$ as a chain in $C_1(G';\bk)$.  As a chain, $\eta_i$ has the following expression:
\begin{eqnarray*}
\eta_i&=&\sum_{\substack{j\in\{1,\dots,l_i\}\\b_{i,j}=1}} \left(\alpha_{i,1}+b_{i,1}e'+\alpha_{i,2}+b_{i,2}e'+\dots+ \alpha_{i,j}\right)\\
&&-\sum_{\substack{j\in\{1,\dots,l_i\}\\b_{i,j}=-1}}\left(\alpha_{i,1}+b_{i,1}e'+\alpha_{i,2}+b_{i,2}e'+\dots+ \alpha_{i,j}-e'\right)
\end{eqnarray*}
 Each term in parentheses is the chain of a path from $v'_0$ to $x'$.
Consequently, we have the following value for the differential $\partial:C_1(G';\bk)\rightarrow C_0(G';\bk)$:
\[\partial \eta'_i=\sum_{j=1}^{l_i} b_{i,j}(x'-v'_0)\]
Then $\eta'=\sum_{i} a_i\eta'_i$ satisfies
\[\partial \eta'=\sum_i \sum_{j=1}^{l_i} a_ib_{i,j}(x'-v'_0)=x'-v'_0.\]
Now, by the description of the homomorphism $\phi_*$ on $\bk[F_{E(G)}]/J_{F_{E(G)}}^3$, we have that 
\[\phi_*(\eta)=\eta'.\]  Taking the differential of both sides of the above equality, we get
\[\phi|_{\gamma_-}(x)-\phi|_{\gamma_-}(v_0)=x'-v'_0\]
which implies that $\phi|_{\gamma_-}(x)=x'$ and $\phi|_{\gamma_-}(v_0)=v'_0$.
\end{proof}

We now give the proof of the theorem:

\begin{proof}
We first prove the case where neither $G$ and $G'$ have any self-edges at their base-points.
We induct on the size of connected subgraphs $H\subseteq G$ for which $\phi|_H$ is an isomorphism onto its image.  We begin the induction with $H=\{v_0\}$.  

For the inductive step, if $H$ is not all of $G$, let $e$ be an edge of $E(G)\setminus E(H)$ that is incident to a vertex $x$ of $H$.  This is possible because $G$ is connected.   Write $e=xy$.  Let $\gamma_-$ be a path in $H$ from $v_0$ to $x$. 

Because $e$ is not a cut edge, there is a path $\gamma_+$ in $G$ from $y$ to $v_0$ that avoids $e$.  
Now, one can apply Lemma \ref{l:isoextension} to $\gamma=\gamma_-e\gamma_+$ and conclude that the terminal point of $\phi|_{H}(\gamma_-)$ is equal to the initial point of $\phi(e)$.  

We must show that $\phi$ extends to an isomorphism of $H\cup\{e\}$ onto its image.  First, consider the case that $y$ is a vertex of $H$.  Let $\delta_-$ be a path in $H$ from $v_0$ to $y$ avoiding $e$.  By applying Lemma \ref{l:isoextension} to $\delta_-e^{-1}(\gamma_-)^{-1}$, we get the terminal point of $\phi(e)$ is $\phi|_{H}(y)$.     Consequently,  $\phi$ extends to an isomorphism of $H\cup\{e\}$ onto its image.  Now, consider the case where $y$ is not a vertex of $H$.   We must show that the terminal point of $\phi(e)$ is not a vertex of $\phi|_{H}(H)$.  If it was, one could apply the above argument to the map $\phi^{-1}$ on $\phi(H)$ and conclude that the terminal point of $e$ is a vertex of $H$.  This contradiction completes the proof.

For the general case, let $H$ and $H'$ be the complement of the self-edges at the base-points in $G$ and $G'$, respectively.  By building up $H$ edge-by-edge as above we get that $\phi$ is an isomorphism between $H$ and $H'$. Since $G$ and $G'$ have the same cycle spaces, they have the same cyclotomic numbers.  Therefore, we can conclude that they have the same number of self-edges at their base-points and that they are isomorphic.  
\end{proof}

\bibliographystyle{amsplain}
\def\cprime{$'$}
\providecommand{\bysame}{\leavevmode\hbox to3em{\hrulefill}\thinspace}
\providecommand{\MR}{\relax\ifhmode\unskip\space\fi MR }
\providecommand{\MRhref}[2]{%
  \href{http://www.ams.org/mathscinet-getitem?mr=#1}{#2}
}
\providecommand{\href}[2]{#2}

\end{document}